    \DeclareSymbolFont{cyrletters}{OT2}{wncyr}{m}{n}
    \DeclareMathSymbol{\Sha}{\mathalpha}{cyrletters}{"58}
     \newtheorem{thm}{Theorem}[section]
     \newtheorem{cor}[thm]{Corollary}
     \newtheorem{lem}[thm]{Lemma}
     \newtheorem{dfn}[thm]{Definition}
     \newtheorem{rmk}[thm]{Remark}
     \theoremstyle{definition}
     \theoremstyle{remark}
     \numberwithin{equation}{section}
    \newcommand{\sm}{\left(\begin{smallmatrix}}
    \newcommand{\esm}{\end{smallmatrix}\right)}
    \newcommand{\mat}{\left(\begin{matrix}}
    \newcommand{\emat}{\end{matrix}\right)}
    \newcommand{\mbf}{\mathbf}
    \def\CC{\mathbb{C}}
    \def\HH{\mathbb{H}}
    \def\GL{\mathrm{GL}}
    \def\Mp{\mathrm{Mp}}
    \def\SL{\mathrm{SL}}
\begin{document}

    \title{Results on the Non-Vanishing of Derivatives of $L$-Functions of Vector-Valued Modular Forms}

   \author{Subong Lim}
   \address{Department of Mathematics Education, Sungkyunkwan University, Jongno-gu, Seoul 110-745, Republic of Korea}
   \email{subong@skku.edu}
   \author{Wissam Raji}
   \address{Department of Mathematics, American University of Beirut (AUB) and the Number Theory Unit at the Center for Advanced Mathematical Sciences (CAMS) at AUB, Beirut, Lebanon}
   \email{wr07@aub.edu.lb}
\date{}

\maketitle
   
   \begin{abstract}
We show a non-vanishing result for the averages of the derivatives of $L$-functions associated with the orthogonal basis of the space of vector-valued cusp forms of weight $k\in \frac12 \mathbb{Z}$ on the full group in the critical strip. We also show the existence of at least one basis element whose $L$-function does not vanish under certain conditions.
As an application, we generalize our result to Kohnen's plus space and prove an analogous result for Jacobi forms. 
\end{abstract} 
  \section{Introduction}

\par The theory of $L$-functions play a crucial role in both number theory and arithmetic geometry. $L$-functions exhibit natural connections with various mathematical subjects including number fields, automorphic forms, Artin representations, Shimura varieties, abelian varieties, and intersection theory. The central values of $L$-functions and their derivatives reveal important connections to the geometric and arithmetic properties of Shimura varieties such as the Gross-Zagier formula, Colmez's conjecture, and the averaged Colmez formula. On the other hand, vector-valued modular forms are important generalizations of elliptic modular forms that arise naturally in the theory of Jacobi forms, Siegel modular forms, and Moonshine. Being an important tool to tackle classical problems in the theory of modular forms, Selberg used these forms to give an estimation for the Fourier coefficients of the classical modular forms \cite{S}. Borcherds in \cite{B1} and \cite{B2} used vector-valued modular forms associated with Weil representations to provide a description of the Fourier expansion of various theta liftings. Some applications of vector-valued modular forms stand out in high-energy physics by mainly providing a method of differential equations in order to construct the modular multiplets, and also revealing the simple structure of the modular invariant mass models \cite{DL}. Other applications concerning vector-valued modular forms of half-integer weight seem to provide a simple solution to the Riemann-Hilbert problem for representations of the modular group \cite{BG}. So it is only natural to study the $L$-functions of vector-valued modular forms and their properties as a buildup that aligns with the development of a Hecke theory to the space of vector-valued modular forms.

  In \cite{LR}, we show that averages of $L$-functions associated with vector-valued cusp  forms do not vanish when the average is taken over the orthogonal basis of the space of vector-valued cusp forms.  To illustrate, we let $\{f_{k,1}, \ldots, f_{k,d_k}\}$ be an orthogonal basis of the space $S_{k,\chi,\rho}$ of vector-valued cusp forms  with Fourier coefficients $b_{k,l,j}(n)$, where $\chi$ is a multiplier system of weight $k\in\frac12 \mathbb{Z}$ on $\mathrm{SL}_2(\mathbb{Z})$ and
$\rho:\SL_2(\mathbb{Z})\to \GL_m(\CC)$ is an $m$-dimensional unitary complex representation.
We also let $t_0\in\mathbb{R}, \epsilon>0$, and $1\leq i\leq m$. 
  Then, there exists a constant $C(t_0, \epsilon, i)>0$ such that for $k>C(t_0, \epsilon, i)$ the function 
  \[
  \sum_{l=1}^{d_k} \frac{<L^*(f_{k,l}, s),\mbf{e}_i>}{(f_{k,l}, f_{k,l})} b_{k,l,i}(n_{i,0})
  \]
  does not vanish at any point $s=\sigma + it_0$ with $\frac{k-1}{2}<\sigma < \frac k2 - \epsilon$, where $<L^*(f_{k,l}, s),\mbf{e}_i>$ denotes the $i$th component of $L^*(f_{k,l}, s)$. 

\par Kohnen, Sengupta, and Weigel in \cite{KSW} proved a nonvanishing result for the derivatives of $L$-functions in the critical strip for elliptic modular forms on the full group.  In \cite{Raj}, the second author generalized their result to modular forms of half-integer weight on the plus space.  In this paper, we show analogous results for the averages of the derivatives of $L$-functions for the orthogonal basis of the space of vector-valued cusp forms in the critical strip.  In particular, given $k\in\frac12 \mathbb{Z}$, $\chi$ a multiplier system of weight $k$ on $\mathrm{SL}_2(\mathbb{Z})$, $t_0\in\mathbb{R}, \epsilon>0, 1\leq i\leq m$, and $n$ a positive integer, we show that
there exists a constant $C(t_0, \epsilon,i, n)>0$ such that for $k>C(t_0, \epsilon, i, n)$ the function
\begin{equation*}
\sum_{l=1}^{g_k} \frac{b_{k,l,i}(n_{i,0})}{(f_{k,l}, f_{k,l})}  \frac{d^n}{ds^n} <L^*(f_{k,l},s), \mbf{e}_i>
\end{equation*}
does not vanish at any point $s = \sigma + it$ with $t = t_0, \frac{k-1}{2} <\sigma < \frac k2 -\epsilon$. 

\par The isomorphism between the space of Jacobi forms of weight $k$ and  index $m$ on $SL_2(\mathbb{Z})$ and the space of vector-valued modular cusp forms with a specific multiplier system and a given Weil representation depending on $m$ leads to an analogous result for Jacobi forms.   We also give a similar result for cusp forms in the plus space.

\section{The Kernel Function}
 In this section, we define the kernel function $R_{k,s,i}$ and determine its Fourier expansion.  The kernel function being a cusp form will play an important role in determining the coefficients of a given cusp form in terms of $L$-functions when the given cusp form is written in terms of the orthogonal basis. So, let $\Gamma = \mathrm{SL}_2(\mathbb{Z})$, $k\in \frac12 \mathbb{Z}$ and $\chi$ a unitary multiplier system of weight $k$ on $\Gamma$, i.e. $\chi:\Gamma\to\mathbb{C}$ satisfies the following conditions:
  \begin{enumerate}
  \item $|\chi(\gamma)| = 1$ for all $\gamma\in \Gamma$.
  \item $\chi$ satisfies the consistency condition
  \[
  \chi(\gamma_3) (c_3\tau + d_3)^k = \chi(\gamma_1)\chi(\gamma_2) (c_1\gamma_2\tau + d_1)^k (c_2\tau+d_2)^k,
  \]
  where $\gamma_3 = \gamma_1\gamma_2$ and $\gamma_i = \sm a_i&b_i\\c_i&d_i\esm\in \Gamma$ for $i=1,2$, and $3$.
  \end{enumerate}
  Let $m$ be a positive integer and $\rho:\Gamma\to \mathrm{GL}(m, \mathbb{C})$ a $m$-dimensional unitary complex representation. 
  Let $\{\mbf{e}_1, \ldots, \mbf{e}_m\}$ denote the standard basis of $\mathbb{C}^m$. 
  For a vector-valued function $f = \sum_{j=1}^m f_j \mbf{e}_j$ on $\mathbb{H}$ and $\gamma\in \Gamma$, define a slash operator by
  \[
  (f|_{k,\chi,\rho}\gamma)(\tau):= (c\tau+d)^{-k} \chi^{-1}(\gamma)\rho^{-1}(\gamma) f(\gamma \tau).
  \]
  The definition of the vector-valued modular forms is given as follows.
  
  \begin{dfn}
  A vector-valued modular form of weight $k\in\frac12\mathbb{Z}$, multiplier system $\chi$, and type $\rho$ on $\Gamma$ is a sum $f = \sum_{j=1}^m f_j \mbf{e}_j$ of functions holomorphic in  $\mathbb{H}$ satisfying the following conditions:
  \begin{enumerate}
  \item $f|_{k,\chi,\rho}\gamma = f$ for all $\gamma\in \Gamma$.
  \item For each $1\leq j\leq m$, each function $f_j$ has a Fourier expansion of the form
  \[
  f_i(\tau) = \sum_{n+\kappa_j\geq 0} a_j(n)e^{2\pi i(n+\kappa_j)\tau}.
  \]
  Here and throughout the paper, $\kappa_j$ is a certain positive number with $0\leq \kappa_j<1$. 
  \end{enumerate}
  \end{dfn}
  
  The space of all vector-valued modular forms of weight $k$, multiplier system $\chi$, and type $\rho$ on $\Gamma$ is denoted by $M_{k,\chi,\rho}$. 
  There is a subspace $S_{k,\chi,\rho}$ of vector-valued cusp forms for which we require that each $a_j(n) = 0$ when $n+\kappa_j$ is non-positive.
  
  For a vector-valued cusp form $f(\tau) = \sum_{j=1}^m \sum_{n+\kappa_j>0} a_j(n)e^{2\pi i(n+\kappa_j)\tau}\mbf{e}_j\in S_{k,\chi,\rho}$, we see that $a_j(n) = O(n^{k/2})$ for every $1\leq j\leq m$ as $n\to\infty$ by the same argument for elliptic modular forms. 
  Then, the vector-valued $L$-function defined by
  \[
  L(f,s) := \sum_{j=1}^m \sum_{n+\kappa_j>0} \frac{a_j(n)}{(n+\kappa_j)^s}\mbf{e}_j
  \]
  converges absolutely for $\mathrm{Re}(s)\gg0$. 
   This has an integral representation
  \[
  \frac{\Gamma(s)}{(2\pi)^s} L(f,s) = \int_0^\infty f(iv) v^s \frac{dv}{v}.
  \]  
  From this, we see that it has an analytic continuation to $\mathbb{C}$ and a functional equation given by
 \[
 L^*(f,s) = i^k \chi(S) \rho(S) L^*(f,k-s),
 \]
 where $L^*(f,s) = \frac{\Gamma(s)}{(2\pi)^s} L(f,s)$ and $S = \sm 0&-1\\1&0\esm$.
 
 Let $i$ be an integer with $1\leq i\leq m$. 
 Define 
 \[
 p_{s,i}(\tau) := \tau^{-s} \mbf{e}_i.
 \]
 For $s\in\mathbb{C}$ with $1<\mathrm{Re}(s)<k-1$, we define the kernel function by
 \[
 R_{k,s,i} := \gamma_k(s) \sum_{\gamma\in \Gamma} p_{s,i} |_{k,\chi,\rho}\gamma,
 \]
 where $\gamma_{k}(s) := \frac12 e^{\pi is/2} \Gamma(s) \Gamma(k-s)$.
 Then, this series converges absolutely uniformly whenever $\tau = u+iv$ satisfies $v\geq \epsilon, v\leq 1/\epsilon$ for a given $\epsilon>0$ and $s$ varies over a compact set.
 Moreover, it is a vector-valued cusp form in $S_{k,\chi,\rho}$. 
 
 We write $<\cdot, \cdot>$ for the standard scalar product on $\mathbb{C}^m$, i.e.
 \[
 \left< \sum_{j=1}^m \lambda_j \mbf{e}_j, \sum_{j=1}^m \mu_j \mbf{e}_j \right> = \sum_{j=1}^m \lambda_j \overline{\mu_j}.
 \]
 Then, for $f,g\in M_{k,\chi,\rho}$, we define the Petersson scalar product of $f$ and $g$ by
 \[
 (f,g) := \int_{\mathcal{F}} <f(\tau), g(\tau)> v^k \frac{dudv}{v^2}
 \]
 if the integral converges, where $\mathcal{F}$ is the standard fundamental domain for the action of $\Gamma$ on $\mathbb{H}$.
 Then,  by  \cite[Lemma 3.1]{LR}, we have
 \begin{equation} \label{Petersson}
 (f, R_{k,\bar{s}, i}) = c_k <L^*(f, s), \mbf{e}_i >,
 \end{equation}
 where $c_k := \frac{(-1)^{k/2} \pi (k-2)!}{2^{k-2}}$.

 We can also compute the Fourier expansion of $R_{k,s,i}$.

\begin{lem} \label{Fourier} \cite[Lemma 3.2]{LR}
The function $R_{k,s,i}$ has the Fourier expansion
\[
R_{k,s,i}(\tau) = \sum_{j=1}^m \sum_{n+\kappa_j>0} r_{k,s,i,j}(n)e^{2\pi i(n+\kappa_j)\tau},
\]
where $r_{k,s,i,j}(n)$ is given by
\begin{eqnarray*}
r_{k,s,i,j}(n) &=& \delta_{i,j}  (2\pi)^s \Gamma(k-s)(n+\kappa_i)^{s-1}\\
&& + \chi^{-1} \left( \sm 0&-1\\1&0\esm \right)\rho^{-1}\left( \sm 0&-1\\1&0\esm \right)_{j,i} (-1)^{k/2}(2\pi)^{k-s}\Gamma(s) (n+\kappa_j)^{k-s-1}\\
&&+\frac{ (-1)^{k/2}}{2} (2\pi)^k (n+\kappa_j)^{k-1} \frac{\Gamma(s)\Gamma(k-s)}{\Gamma(k)}\sum_{\substack{(c,d)\in\mathbb{Z}^2 \\ (c,d)=1, ac>0}}    c^{-k} \left( \frac ca \right)^{s}\\
&&\quad\times \bigg(e^{2\pi i(n+\kappa_j)d/c} e^{\pi is}\chi^{-1}\left( \sm a&b\\c&d\esm\right)  \rho^{-1}\left( \sm a&b\\c&d\esm\right)_{j,i}\ _1F_1(s, k; -2\pi in/(ac))\\
&&\quad \quad + e^{-2\pi i(n+\kappa_j)d/c} e^{-\pi is} \chi^{-1}\left( \sm -a&b\\c&-d\esm\right) \rho^{-1}\left( \sm -a&b\\c&-d\esm\right)_{j,i}\ _1F_1(s, k; 2\pi in/(ac)) \bigg),
\end{eqnarray*}
where $_1 F_1(\alpha, \beta;z)$ is Kummer's degenerate hypergeometric function.
\end{lem}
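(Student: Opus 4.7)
The plan is to compute the Fourier expansion directly from the definition of $R_{k,s,i}$ as a Poincar\'e-type series. I would extract the $(n+\kappa_j)$-th Fourier coefficient of the $\mbf{e}_j$-component via
\[
r_{k,s,i,j}(n)\, e^{-2\pi (n+\kappa_j)v} \;=\; \int_0^1 \langle R_{k,s,i}(u+iv),\mbf{e}_j\rangle\, e^{-2\pi i(n+\kappa_j)u}\, du,
\]
interchange sum and integral (justified by the uniform absolute convergence on compacta noted right after the definition of $R_{k,s,i}$), and then partition $\Gamma$ into three groups that will yield the three terms of the lemma.

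The natural decomposition is by the entries $a,c$ of $\gamma=\sm a&b\\c&d\esm$: (I) $c=0$, in which case $\gamma=\pm T^a$ with $T=\sm 1&1\\0&1\esm$; (II) $c\neq 0$ and $a=0$, in which case $\gamma=\pm ST^d$ with $S=\sm 0&-1\\1&0\esm$; (III) the remaining matrices, which have $ac\neq 0$. For group (I), the consistency condition for $\chi$ together with the fact that $\chi(T)\rho(T)$ acts on $\mbf{e}_j$ by $e^{2\pi i\kappa_j}$ collapses the $(j,i)$-projection of $\sum_a \chi^{-1}(T^a)\rho^{-1}(T^a)_{j,i}(\tau+a)^{-s}$ to $\delta_{i,j}\sum_a e^{-2\pi i\kappa_i a}(\tau+a)^{-s}$, which the classical Lipschitz summation formula evaluates to a Fourier series with frequencies $m+\kappa_i$. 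Combining with the $-T^a$ terms and the prefactor $\gamma_k(s)$, and simplifying $(-2\pi i)^s=(2\pi)^s e^{-\pi is/2}$, yields the first term. Group (II) is analogous: writing $(-1/(\tau+d))^{-s}=e^{-\pi is}(\tau+d)^s$ via the principal branch, a second Lipschitz evaluation of $\sum_d e^{-2\pi i\kappa_j d}(\tau+d)^{s-k}$ produces the second term with exponent $k-s-1$ and prefactor $\chi^{-1}(S)\rho^{-1}(S)_{j,i}$.

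Group (III) is the main calculation and relies on Petersson's unfolding. For each coprime pair $(c,d)$ with $c\neq 0$ and $a\neq 0$, I would choose a representative $\gamma_{c,d}$ and absorb the sum over left $T^n$-shifts into the integral $\int_0^1 du$, unfolding the latter to $\int_{-\infty}^\infty du$. After the change of variable $u\mapsto u-d/c$, the resulting single integral takes the form
\[
\int_{-\infty}^\infty \tau^{-s}(\tau+A)^{-k}e^{-2\pi i\beta\tau}\,du,
\]
which is evaluated in closed form by the standard integral representation of Kummer's confluent hypergeometric function ${}_1F_1$. Pairing the matrix $\sm a&b\\c&d\esm$ with $\sm -a&b\\c&-d\esm$ (so that the two together exhaust the $ac\neq 0$ matrices under the convention $ac>0$) produces the two summands inside the brackets of the third term, with the phases $e^{\pm 2\pi i(n+\kappa_j)d/c}$ and $e^{\pm\pi is}$ and the Kloosterman-like factor $c^{-k}(c/a)^s$.

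The main obstacle is step (III): evaluating the $u$-integral in closed form as ${}_1F_1$ and carefully tracking all phase factors, namely the $e^{\pm\pi is/2}$ arising from the principal branch of $\tau^{-s}$, the $(-1)^{k/2}$ from $(c\tau+d)^{-k}$, and the Lipschitz-type phase $e^{\pm 2\pi i(n+\kappa_j)d/c}$ generated by the shift. The vector-valued structure contributes only routine bookkeeping, since $\chi$ and $\rho$ factor cleanly across the $\Gamma_\infty$-coset decomposition: the consistency condition makes $\chi$ multiplicative on $T$-shifts, and $\rho$ is a homomorphism, so $\chi^{-1}(\gamma)\rho^{-1}(\gamma)_{j,i}$ can be pulled outside the unfolding without interfering with the main integral.
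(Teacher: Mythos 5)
Your outline is correct and is essentially the argument this lemma rests on: the paper gives no proof here (it imports the statement from \cite[Lemma 3.2]{LR}), and the computation there is exactly the Kohnen--Zagier-style one you describe --- split $\Gamma$ into the pieces $c=0$, $a=0$, and $ac\neq 0$, evaluate the first two by (twisted) Lipschitz summation using that $\chi(T)\rho(T)$ acts on $\mathbf{e}_j$ by $e^{2\pi i\kappa_j}$, and unfold the third over the cosets $\gamma_0\Gamma_\infty$ before evaluating the resulting line integral by the Euler integral representation of ${}_1F_1$. The only slips are cosmetic: the unfolded integrand should be $\tau^{-s}(\tau+A)^{s-k}e^{-2\pi i\beta\tau}$ rather than $\tau^{-s}(\tau+A)^{-k}e^{-2\pi i\beta\tau}$, and the cosets that periodize $\tau\mapsto\tau+n$ are those with fixed first column $(a,c)$ (i.e.\ $\gamma_0T^n$), not fixed bottom row, which is why the pairing of $\left(\begin{smallmatrix} a&b\\c&d\end{smallmatrix}\right)$ with $\left(\begin{smallmatrix} -a&b\\c&-d\end{smallmatrix}\right)$ under the normalization $ac>0$ then reindexes the answer in the form stated.
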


\section{The Main Theorem}
In this section, we give the main theorem for the non-vanishing of an average of derivatives of $L$-functions in the critical strip.  We also derive a corollary about the existence of at least one $L$-function whose derivative does not vanish.  To do so, let 
    \begin{equation*}
  n_{i,0} := 
  \begin{cases}
  1 & \text{if $\kappa_i = 0$},\\
  0 & \text{if $\kappa_i\neq0$}.
  \end{cases}
  \end{equation*}
Then, we have the following theorem. 

\begin{thm} \label{main}
Let $k\in\frac12 \mathbb{Z}$ and let $\chi$ be a multiplier system of weight $k$ on $\mathrm{SL}_2(\mathbb{Z})$.
Suppose that $\{f_{k,1}, \ldots, f_{k,g_k}\}$ is an orthogonal basis of $S_{k,\chi,\rho}$ with Fourier expansions
\[
 f_{k,l}(\tau) = \sum_{j=1}^m \sum_{n+\kappa_j>0} b_{k,l,j}(n)e^{2\pi i(n+\kappa_j)\tau}\ (1\leq l\leq g_k),
 \]
where $g_k := \dim S_{k,\chi,\rho}$.
Let $t_0\in\mathbb{R}, \epsilon>0, 1\leq i\leq m$, and $n$ a positive integer. 
Then, there exists a constant $C(t_0, \epsilon, i,n)>0$ such that for $k>C(t_0, \epsilon,i, n)$ the function
\begin{equation} \label{weightedsum}
\sum_{l=1}^{g_k} \frac{b_{k,l,i}(n_{i,0})}{(f_{k,l}, f_{k,l})} \frac{d^n}{ds^n} <L^*(f_{k,l},s), \mbf{e}_i>
\end{equation}
does not vanish at any point $s = \sigma + it$ with $t = t_0, \frac{k-1}{2} <\sigma < \frac k2 -\epsilon$.
\end{thm}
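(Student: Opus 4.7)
The plan is to extend the kernel-function approach of \cite{KSW} and \cite{LR} by constructing a kernel adapted to the $n$-th derivative of $L^*$. First I would differentiate both sides of \eqref{Petersson} $n$ times in $s$. Since the Poincar\'e series defining $R_{k,s,i}$ converges absolutely and uniformly on compacta in $s$, and term-by-term $s$-differentiation of all orders preserves this convergence once $k$ is large, the derivative commutes with the Petersson integral. This produces a vector-valued cusp form $R^{(n)}_{k,\bar{s},i}\in S_{k,\chi,\rho}$ whose $(j,m)$-Fourier coefficient is
\[
r^{(n)}_{k,\bar{s},i,j}(m)\;:=\;\frac{d^n}{dw^n}\,r_{k,w,i,j}(m)\bigg|_{w=\bar{s}},
\]
and which satisfies
\[
(f_{k,l},R^{(n)}_{k,\bar{s},i})\;=\;c_k\,\frac{d^n}{ds^n}\langle L^*(f_{k,l},s),\mbf{e}_i\rangle
\]
for every $1\leq l\leq g_k$.

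Next, since $R^{(n)}_{k,\bar{s},i}$ lies in $S_{k,\chi,\rho}$, I would expand it in the given orthogonal basis,
\[
R^{(n)}_{k,\bar{s},i}\;=\;\sum_{l=1}^{g_k}\frac{\overline{c_k\,\frac{d^n}{ds^n}\langle L^*(f_{k,l},s),\mbf{e}_i\rangle}}{(f_{k,l},f_{k,l})}\,f_{k,l},
\]
and read off the $(i,n_{i,0})$-Fourier coefficient on each side. This identifies the sum \eqref{weightedsum}, up to a complex conjugation absorbed by replacing $t_0$ with $-t_0$ (which is harmless since $t_0\in\RR$ is arbitrary), with the single Fourier coefficient $r^{(n)}_{k,\bar{s},i,i}(n_{i,0})$ of the derivative kernel. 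The theorem is thereby reduced to showing that this Fourier coefficient does not vanish on the horizontal segment $\mathrm{Im}(s)=t_0$, $\frac{k-1}{2}<\mathrm{Re}(s)<\frac{k}{2}-\epsilon$, once $k$ exceeds some explicit threshold $C(t_0,\epsilon,i,n)$.

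The final step is a dominant-term analysis via Lemma \ref{Fourier} with $j=i$ and $m=n_{i,0}$. The ``diagonal'' piece of $r_{k,w,i,i}(n_{i,0})$ is $(2\pi)^w\Gamma(k-w)(n_{i,0}+\kappa_i)^{w-1}$; its $n$-th $w$-derivative at $w=\bar{s}$ is a polynomial in $\log(2\pi(n_{i,0}+\kappa_i))$ and the polygamma values $\psi^{(r)}(k-\bar{s})$ ($0\leq r<n$) multiplied by $\Gamma(k-\bar{s})$, which by Stirling is of absolute size on the order of $\Gamma(k-\sigma)(\log k)^n$ as $k\to\infty$. The ``functional-equation'' piece involves $\Gamma(\bar{s})$ and is of size $O_{t_0,\epsilon,n}((\log k)^n)$, while the Kloosterman-type sum over coprime $(c,d)$ is controlled by the Beta factor $\Gamma(\bar{s})\Gamma(k-\bar{s})/\Gamma(k)=O_{t_0,\epsilon}(k^{-\sigma})$ together with uniform bounds on the parameter derivatives of Kummer's $_1F_1(s,k;z)$. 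Choosing $C(t_0,\epsilon,i,n)$ large enough that the diagonal contribution dwarfs the other two will then give the required non-vanishing.

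The main technical obstacle is precisely this last uniform control of the $n$-fold $s$-derivative of the $(c,d)$-sum. Each differentiation introduces additional $\log$-type factors from $\Gamma(s)$, $\Gamma(k-s)$, and $_1F_1(s,k;z)$, and one must show that these neither destroy convergence of the sum over coprime pairs $(c,d)$ nor overturn the overall domination by $\Gamma(k-\sigma)$. This reduces to standard but careful asymptotic estimates for Kummer's confluent hypergeometric function and its parameter derivatives, refining those already used in \cite{LR} for the $n=0$ case; once such estimates are established, the threshold $C(t_0,\epsilon,i,n)$ can be written down explicitly and the proof completed.
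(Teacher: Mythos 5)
Your proposal follows essentially the same route as the paper: both reduce the claim, via the kernel identity $(f,R_{k,\bar s,i})=c_k\langle L^*(f,s),\mathbf{e}_i\rangle$ and the expansion of $R_{k,s,i}$ in the orthogonal basis, to the non-vanishing of $\frac{d^n}{ds^n}r_{k,s,i,i}(n_{i,0})$, and then argue that after dividing by $(2\pi)^s(n_{i,0}+\kappa_i)^{s-1}\Gamma(k-s)$ the diagonal term contributes $\gg|\log(\tfrac k2+\delta-it_0)|^n$ while the functional-equation term and the $(c,d)$-sum tend to $0$ as $k\to\infty$. The one step you leave as a sketch --- uniform control of the $s$-derivatives of ${}_1F_1(s,k;z)$ and convergence of the coprime sum --- is carried out in the paper exactly as you anticipate, by differentiating the Euler integral $\int_0^1 e^{zu}u^{s-1}(1-u)^{k-s-1}\,du$ under the integral sign to obtain bounds $K_n$ depending only on $n$.
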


\begin{proof}
We follow the argument in the proof of Theorem 3.1 in \cite{KSW}.
For each $1\leq i\leq m$, by (\ref{Petersson}), we have 
\begin{equation} \label{Rexpansion}
R_{k,s,i} = c_k \sum_{l=1}^{g_k} \frac{<L^*(f_{k,l}, s),\mbf{e}_i>}{(f_{k,l}, f_{k,l})} f_{k,l}.
\end{equation}
If we take the first Fourier coefficients of $i$th component function on both sides of (\ref{Rexpansion}), then by Lemma \ref{Fourier} we have
\begin{eqnarray} \label{firstterm}
&&   (2\pi)^s \Gamma(k-s)(n_{i,0}+\kappa_i)^{s-1}\\
\nonumber && + \chi^{-1} \left( \sm 0&-1\\1&0\esm \right)\rho^{-1}\left( \sm 0&-1\\1&0\esm \right)_{i,i} (-1)^{k/2}(2\pi)^{k-s}\Gamma(s) (n_{i,0}+\kappa_i)^{k-s-1}\\
\nonumber &&+\frac{ (-1)^{k/2}}{2} (2\pi)^k (n_{i,0}+\kappa_i)^{k-1}\sum_{\substack{(c,d)\in\mathbb{Z}^2 \\ (c,d)=1, ac>0}}    c^{-k} \left( \frac ca \right)^{s}\\
\nonumber &&\quad\times \bigg(e^{2\pi i(n_{i,0}+\kappa_j)d/c} e^{\pi is}\chi^{-1}\left( \sm a&b\\c&d\esm\right)  \rho^{-1}\left( \sm a&b\\c&d\esm\right)_{i,i}\ _1f_1(s, k; -2\pi in_{i,0}/(ac))\\
\nonumber &&\quad \quad + e^{-2\pi i(n_{i,0}+\kappa_i)d/c} e^{-\pi is} \chi^{-1}\left( \sm -a&b\\c&-d\esm\right) \rho^{-1}\left( \sm -a&b\\c&-d\esm\right)_{i,i}\ _1f_1(s, k; 2\pi in_{i,0}/(ac)) \bigg)\\
\nonumber &&=  c_k \sum_{l=1}^{g_k} \frac{<L^*(f_{k,l}, s),\mbf{e}_i>}{(f_{k,l}, f_{k,l})} b_{k,l,i}(n_{i,0}),
\end{eqnarray}
where  
\[
_1 f_1(\alpha, \beta;z) := \frac{\Gamma(\alpha) \Gamma(\beta-\alpha)}{\Gamma(\beta)}\ _1F_1(\alpha, \beta;z).
\]

We assume that (\ref{weightedsum}) is zero. 
If we take the $n$th derivative with respect to $s$ on both sides in (\ref{firstterm}), then we have
\begin{eqnarray} \label{derivative}
&&\frac{d^n}{d s^n} \bigg[ (2\pi)^s \Gamma(k-s)(n_{i,0}+\kappa_i)^{s-1}\bigg]\\
\nonumber && = -\frac{d^n}{ds^n}\bigg[  \chi^{-1} \left( \sm 0&-1\\1&0\esm \right)\rho^{-1}\left( \sm 0&-1\\1&0\esm \right)_{i,i} (-1)^{k/2}(2\pi)^{k-s}\Gamma(s) (n_{i,0}+\kappa_i)^{k-s-1}\bigg]\\
\nonumber &&-\frac{d^n}{ds^n}\bigg[ \frac{ (-1)^{k/2}}{2} (2\pi)^k (n_{i,0}+\kappa_i)^{k-1}\sum_{\substack{(c,d)\in\mathbb{Z}^2 \\ (c,d)=1, ac>0}}    c^{-k} \left( \frac ca \right)^{s}\\
\nonumber &&\quad\times \bigg(e^{2\pi i(n_{i,0}+\kappa_j)d/c} e^{\pi is}\chi^{-1}\left( \sm a&b\\c&d\esm\right)  \rho^{-1}\left( \sm a&b\\c&d\esm\right)_{i,i}\ _1f_1(s, k; -2\pi in_{i,0}/(ac))\\
\nonumber &&\quad \quad + e^{-2\pi i(n_{i,0}+\kappa_i)d/c} e^{-\pi is} \chi^{-1}\left( \sm -a&b\\c&-d\esm\right) \rho^{-1}\left( \sm -a&b\\c&-d\esm\right)_{i,i}\ _1f_1(s, k; 2\pi in_{i,0}/(ac)) \bigg) \bigg].
\end{eqnarray}

Then, the left-hand side of (\ref{derivative}) is equal to 
\begin{eqnarray*}
&& \frac{1}{n_{i,0}+\kappa_i} \sum_{\nu =0}^n \frac{d^\nu}{ds^\nu} [(2\pi(n_{i,0}+\kappa_i))^s] \frac{d^{n-\nu}}{ds^{n-\nu}} \Gamma(k-s)\\
&&= \frac{(2\pi(n_{i,0}+\kappa_i))^s}{n_{i,0}+\kappa_i}  \sum_{\nu =0}^n (-1)^{n-\nu} \binom{n}{\nu} (\log(2\pi(n_{i,0}+\kappa_i)))^{\nu} \Gamma^{(n-\nu)}(k-s)\\
&&= (2\pi)^s (n_{i,0}+\kappa_i)^{s-1}  (\log(2\pi(n_{i,0}+\kappa_i)))^n \Gamma(k-s)\\
&& \qquad +  (2\pi)^s (n_{i,0}+\kappa_i)^{s-1} \sum_{\nu =0}^{n-1}(-1)^{n-\nu} \binom{n}{\nu} (\log(2\pi(n_{i,0}+\kappa_i)))^{\nu} \Gamma^{(n-\nu)}(k-s).
\end{eqnarray*}
Then, we have
\begin{eqnarray*}
&&\frac{1}{(2\pi)^s (n_{i,0}+\kappa_i)^{s-1} \Gamma(k-s)} \cdot  \frac{d^n}{d s^n} \bigg[ (2\pi)^s \Gamma(k-s)(n_{i,0}+\kappa_i)^{s-1}\bigg]\\
&&= (\log(2\pi(n_{i,0}+\kappa_i)))^n  +  \sum_{\nu =0}^{n-1}(-1)^{n-\nu} \binom{n}{\nu} (\log(2\pi(n_{i,0}+\kappa_i)))^{\nu} \frac{ \Gamma^{(n-\nu)}(k-s)}{\Gamma(k-s)}.
\end{eqnarray*}

 Let $\psi(s) := \frac{\Gamma'(s)} {\Gamma(s)}$. 
 Then, one can see that $\frac{\Gamma^{(n)}(s)}{\Gamma(s)}$ is a polynomial $P(\psi, \psi^{(1)}, \ldots, \psi^{(n-1)})$ with integral coefficients and it contains the term $\psi^n$ which is the highest power of $\psi$ occurring in $P$. 
 It is known that $\psi$ satisfies the following asymptotic formulas
 \begin{equation*} \label{psi1}
 \psi(s) \sim \log(s) - \frac{1}{2s} - \sum_{\nu=1}^\infty \frac{B_{2\nu}} {2\nu s^{2\nu}}
 \end{equation*}
 and
 \begin{equation*} \label{psi2}
 \psi^{(n)}(s) \sim (-1)^{n-1} \left( \frac{(n-1)!}{s^n} + \frac{n!}{2s^{n+1}} + \sum_{\nu=0}^\infty B_{2\nu} \frac{(2\nu+n-1)!}{(2\nu)! s^{2\nu+n}}\right)
 \end{equation*}
 for $s\to\infty$ in $|\arg(s)|<\pi$, where $B_n$ denotes the $n$th Bernoulli number (for example, see \cite[6.3.18 and 6.4.11]{AS}). 
 Let $s = \frac{k}{2} - \delta + it_0\ (\epsilon < \delta < \frac12)$.
 Then the leading term of $\frac{\Gamma^{(n-\nu)}(k-s)} {\Gamma(k-s)}$ for $0\leq \nu\leq n-1$ is $(\log(\frac k2 +\delta - it_0))^{n-\nu}$ as $k\to\infty$ and $\psi^{(n)}(s) = o\left( \frac{1}{|s|^n}\right)$ as $|s|\to\infty$ in $|\arg(s)|<\pi$ for $n\in\mathbb{N}$.
 Therefore, we have
 \begin{equation*}
  \sum_{\nu =0}^{n-1}(-1)^{n-\nu} \binom{n}{\nu} (\log(2\pi(n_{i,0}+\kappa_i)))^{\nu} \frac{ \Gamma^{(n-\nu)}(k-s)}{\Gamma(k-s)} = Q\left( \log(\frac k2 +\delta - it_0)\right) + o(1)
 \end{equation*}
 as $k\to\infty$, where $Q$ is a polynomial of degree $n$ and its highest coefficient is $(-1)^n$.
 
 For the first term on the right-hand side of (\ref{derivative}), we have
 \begin{eqnarray*}
&& \frac{d^n}{ds^n}\bigg[  \chi^{-1} \left( \sm 0&-1\\1&0\esm \right)\rho^{-1}\left( \sm 0&-1\\1&0\esm \right)_{i,i} (-1)^{k/2}(2\pi)^{k-s}\Gamma(s) (n_{i,0}+\kappa_i)^{k-s-1}\bigg] \\
&& = \chi^{-1} \left( \sm 0&-1\\1&0\esm \right)\rho^{-1}\left( \sm 0&-1\\1&0\esm \right)_{i,i} (-1)^{k/2}\frac{(2\pi(n_{i,0}+\kappa_i))^{k}}{n_{i,0}+\kappa_i}
\sum_{\nu = 0}^n \binom{n}{\nu} \frac{d^\nu}{ds^{\nu}} [ (2\pi  (n_{i,0}+\kappa_i))^{-s}] \frac{d^{n-\nu}}{ds^{n-\nu}} \Gamma(s)\\
&& = \chi^{-1} \left( \sm 0&-1\\1&0\esm \right)\rho^{-1}\left( \sm 0&-1\\1&0\esm \right)_{i,i} (-1)^{k/2}\frac{(2\pi(n_{i,0}+\kappa_i))^{k-s}}{n_{i,0}+\kappa_i}\\
&&\qquad \times
\sum_{\nu = 0}^n  (-1)^{\nu} \binom{n}{\nu} \log (2\pi  (n_{i,0}+\kappa_i))^\nu  \Gamma^{(n-\nu)}(s).
 \end{eqnarray*}
 If we divide this by $(2\pi)^s  (n_{i,0}+\kappa_i)^{s-1} \Gamma(k-s)$, then we have
 \begin{eqnarray} \label{secondterm}
 &&\chi^{-1} \left( \sm 0&-1\\1&0\esm \right)\rho^{-1}\left( \sm 0&-1\\1&0\esm \right)_{i,i} (-1)^{k/2}\frac{(2\pi(n_{i,0}+\kappa_i))^{k-2s}}{(n_{i,0}+\kappa_i)^2}\\
\nonumber &&\qquad \times
\sum_{\nu = 0}^n  (-1)^{\nu} \binom{n}{\nu} \log (2\pi  (n_{i,0}+\kappa_i))^\nu \frac{ \Gamma^{(n-\nu)}(s)}{\Gamma(s)} \cdot \frac{\Gamma(s)}{\Gamma(k-s)}.
 \end{eqnarray}
  Let $s = \frac{k}{2} - \delta + it_0\ (\epsilon < \delta < \frac12)$.
  Then by \cite[6.1.23 and 6.1.47]{AS},   we have
  \begin{eqnarray*}
 \left| \frac{\Gamma(s)}{\Gamma(k-s)} \right| = \left| \frac k2 + it_0 \right|^{-2\delta} \cdot \left |1+ O\left(\frac{1}{|\frac k2 + it_0|}\right)\right|,
 \end{eqnarray*}
 where the $O$ constant is absolute, uniformly in $\epsilon<\delta<\frac12$.
 On the other hand, the highest order term in $\frac{\Gamma^{(n-\nu)}(s)}{\Gamma(s)}$  is $\left( \psi \left(\frac k2 -\delta + it_0\right)\right)^{n-\nu}$. 
 This behaves like $(\log(\frac k2 - \delta + it_0))^{n-\nu}$ for $0\leq \nu < n$ as $k\to\infty$. 
 Thus, we can see that all terms in the sum in (\ref{secondterm}) go to zero as $k\to\infty$. 
 
 The second term on the right-hand side of (\ref{derivative}) is equal to 
 \begin{eqnarray}  \label{thirdterm}
 \nonumber &&- \frac{ (-1)^{k/2}}{2} (2\pi)^k (n_{i,0}+\kappa_i)^{k-1}\sum_{\substack{(c,d)\in\mathbb{Z}^2 \\ (c,d)=1, ac>0}}    c^{-k} \frac{d^n}{ds^n}\bigg[\left( \frac ca \right)^{s}\\
\nonumber &&\quad\times \bigg(e^{2\pi i(n_{i,0}+\kappa_j)d/c} e^{\pi is}\chi^{-1}\left( \sm a&b\\c&d\esm\right)  \rho^{-1}\left( \sm a&b\\c&d\esm\right)_{i,i}\ _1f_1(s, k; -2\pi in_{i,0}/(ac))\\
\nonumber &&\quad \quad + e^{-2\pi i(n_{i,0}+\kappa_i)d/c} e^{-\pi is} \chi^{-1}\left( \sm -a&b\\c&-d\esm\right) \rho^{-1}\left( \sm -a&b\\c&-d\esm\right)_{i,i}\ _1f_1(s, k; 2\pi in_{i,0}/(ac)) \bigg) \bigg]\\
  &&=- \frac{ (-1)^{k/2}}{2} (2\pi)^k (n_{i,0}+\kappa_i)^{k-1}\sum_{\substack{(c,d)\in\mathbb{Z}^2 \\ (c,d)=1, ac>0}}    c^{-k}
 \sum_{\nu=0}^n \binom{n}{\nu} \left( \frac ca \right)^{s} \left( \log\left( \frac ca\right)\right)^\nu 
 \\
\nonumber &&\quad\times  \frac{d^{n-\nu}}{ds^{n-\nu}}\bigg[\bigg(e^{2\pi i(n_{i,0}+\kappa_j)d/c} e^{\pi is}\chi^{-1}\left( \sm a&b\\c&d\esm\right)  \rho^{-1}\left( \sm a&b\\c&d\esm\right)_{i,i}\ _1f_1(s, k; -2\pi in_{i,0}/(ac))\\
\nonumber &&\quad \quad + e^{-2\pi i(n_{i,0}+\kappa_i)d/c} e^{-\pi is} \chi^{-1}\left( \sm -a&b\\c&-d\esm\right) \rho^{-1}\left( \sm -a&b\\c&-d\esm\right)_{i,i}\ _1f_1(s, k; 2\pi in_{i,0}/(ac)) \bigg) \bigg].
 \end{eqnarray}
 In the above equation, the derivative in the last two lines is equal to
 \begin{eqnarray*}
 &&\sum_{w=0}^{n-\nu} \binom{n-\nu}{w} \bigg\{ e^{2\pi i(n_{i,0}+\kappa_j)d/c}\chi^{-1}\left( \sm a&b\\c&d\esm\right)  \rho^{-1}\left( \sm a&b\\c&d\esm\right)_{i,i}\\
 &&\qquad \qquad  \qquad \qquad \times \frac{d^w}{ds^w} \big[e^{\pi is}\big] \frac{d^{n-\nu-w}}{ds^{n-\nu-w}} \big[\ _1f_1(s, k; -2\pi in_{i,0}/(ac))\big]\\
 && \qquad \qquad  \qquad+ e^{-2\pi i(n_{i,0}+\kappa_i)d/c} \chi^{-1}\left( \sm -a&b\\c&-d\esm\right) \rho^{-1}\left( \sm -a&b\\c&-d\esm\right)_{i,i}\\
 &&\qquad \qquad  \qquad \qquad \times\frac{d^w}{ds^w} \big[e^{-\pi is}\big] \frac{d^{n-\nu-w}}{ds^{n-\nu-w}} \big[\ _1f_1(s, k; 2\pi in_{i,0}/(ac))\big]\bigg\}\\
  &&\sum_{w=0}^{n-\nu} \binom{n-\nu}{w} \bigg\{ \left(\pi i\right)^w 
  e^{2\pi i(n_{i,0}+\kappa_j)d/c}e^{\pi is}\chi^{-1}\left( \sm a&b\\c&d\esm\right)  \rho^{-1}\left( \sm a&b\\c&d\esm\right)_{i,i}\\
 &&\qquad \qquad  \qquad \qquad \times  \frac{d^{n-\nu-w}}{ds^{n-\nu-w}} \big[\ _1f_1(s, k; -2\pi in_{i,0}/(ac))\big]\\
 && \qquad \qquad  \qquad+ \left(-\pi i\right)^w  e^{-2\pi i(n_{i,0}+\kappa_i)d/c} e^{-\pi is}\chi^{-1}\left( \sm -a&b\\c&-d\esm\right) \rho^{-1}\left( \sm -a&b\\c&-d\esm\right)_{i,i}\\
 &&\qquad \qquad  \qquad \qquad \times\frac{d^{n-\nu-w}}{ds^{n-\nu-w}} \big[\ _1f_1(s, k; 2\pi in_{i,0}/(ac))\big]\bigg\}.
 \end{eqnarray*}
 
By \cite[13.2.1]{AS}, for $\mathrm{Re}(\beta) > \mathrm{Re}(\alpha)>0$,  we have
\[
\ _1 f_1(\alpha, \beta; z) = \int_0^1 e^{zu} u^{\alpha-1} (1-u)^{\beta-\alpha-1} du.
\]
Therefore,  for any $n\in\mathbb{Z}_{\geq0}$, we obtain
\begin{eqnarray*}
&&\frac{d^n}{ds^n} \left[\ _1f_1\left( s, k; \pm \frac{2\pi in_{i,0}}{ac}\right)\right] \\
&&= \int_0^1 e^{\pm \frac{2\pi in_{i,0}}{ac} u} \frac{d^n}{ds^n} \left[ u^{s-1}(1-u)^{k-s-1}\right] du \\
&&= \int_0^1  e^{\pm \frac{2\pi in_{i,0}}{ac} u}  \left( \sum_{j=0}^n (-1)^{n-j} \binom{n}{j} (\log(u))^j (\log(1-u))^{n-j}\right) u^{s-1}(1-u)^{k-s-1}du.
\end{eqnarray*}
Since $\log(u)= o(u^{-\epsilon'})$ for any $\epsilon'>0$ as $u\to0$, we see that 
\[
\left| \frac{d^n}{ds^n} \left[\ _1f_1\left( s, k; \pm \frac{2\pi in_{i,0}}{ac}\right)\right] \right| \leq K_n,
\]
where $K_n$ is a constant depending only on $n$. 

Let $s = \frac k2 - \delta + it_0\ (\epsilon <\delta<\frac12)$. 
Then, the series in (\ref{thirdterm}) is
\begin{eqnarray*}
&&\ll \sum_{a=1}^\infty \sum_{c=1}^\infty  a^{-\frac k2 + \delta} c^{-\frac k2 - \delta} \bigg( 2\left| \log\left( \frac ca\right)\right|^n e^{\pi  |t_0|}K_0 \\
&&\qquad \qquad \qquad + 2e^{\pi  |t_0|}  \sum_{\nu=0}^{n-1} \binom{n}{\nu} \left| \log\left( \frac ca\right)\right|^\nu \sum_{w=0}^{n-\nu} \binom{n-\nu}{w} \left(\frac \pi 2\right)^w K_{n-\nu-w}\bigg).
\end{eqnarray*}
This can be estimated in terms of the Riemann zeta function and a positive constant factor $B(t_0, n)$ depending only on $t_0$ and $n$. 
If we divide the second term on the right-hand side of (\ref{derivative}) by  $(2\pi)^s  (n_{i,0}+\kappa_i)^{s-1} \Gamma(k-s)$, then the absolute value is
\begin{eqnarray*}
\ll \frac{(2\pi(n_{i,0}+\kappa_i))^{\frac k2 + \delta}}{\Gamma(\frac k2+\delta-it_0)} B(t_0, n)
\end{eqnarray*}
and this goes to $0$ as $k\to\infty$ uniformly in $\delta\in (\epsilon, \frac12)$ by Stirling's formula. 

In conclusion, if we divide both sides of (\ref{derivative}) by  $(2\pi)^s  (n_{i,0}+\kappa_i)^{s-1} \Gamma(k-s)$, then the right-hand side goes to zero as $k\to\infty$ but the absolute value of the left-hand side is 
\[
\gg |\log(\frac k2 + \delta - it_0)|^n
\]
as $k\to\infty$. 
This is a contradiction. 
\end{proof}

By using the functional equation of $L^*(f, s)$ ($f\in S_{k,\chi,\rho}$), we obtain the following corollary.

\begin{cor}
Let $k$, $\chi$, and $\{f_{k,1}, \ldots, f_{k,g_k}\}$  be as in Theorem \ref{main}.
Let $t_0\in \mathbb{R}, \epsilon>0$, and $n$ a positive integer. 
Then, for $k>C(t_0, \epsilon, n)$ and any $s = \sigma + it$ with $t = t_0,\ \frac{k-1}2 < \sigma < \frac k2 - \epsilon,\ \frac k2+\epsilon < \sigma< \frac{k+1}2$, there exists $f_{k,l}$ such that $\frac{d^n}{ds^n} L^*(f, s) \neq 0$.
\end{cor}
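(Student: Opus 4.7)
The plan is to deduce this corollary directly from Theorem \ref{main} together with the functional equation $L^*(f,s) = i^k \chi(S)\rho(S) L^*(f, k-s)$ satisfied by $f \in S_{k,\chi,\rho}$. The two ranges $\frac{k-1}{2} < \sigma < \frac{k}{2} - \epsilon$ and $\frac{k}{2} + \epsilon < \sigma < \frac{k+1}{2}$ are handled separately, the second being reflected into the first.

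For $s = \sigma + it_0$ with $\frac{k-1}{2} < \sigma < \frac{k}{2} - \epsilon$, the approach is a pigeonhole argument on Theorem \ref{main}. I would fix any index $i$ (for concreteness $i = 1$) and take $k > C(t_0, \epsilon, i, n)$, so that the weighted sum in (\ref{weightedsum}) is nonzero at $s$. Since that finite sum is a $\mathbb{C}$-linear combination of the scalars $\frac{d^n}{ds^n}\langle L^*(f_{k,l}, s), \mbf{e}_i\rangle$ over $1 \le l \le g_k$, at least one such scalar must be nonzero. The corresponding basis element $f_{k,l}$ then satisfies $\frac{d^n}{ds^n} L^*(f_{k,l}, s) \neq 0$ as a vector in $\mathbb{C}^m$, which is exactly what is claimed.

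For $s = \sigma + it_0$ in the right half $\frac{k}{2} + \epsilon < \sigma < \frac{k+1}{2}$, the idea is to differentiate the functional equation $n$ times in $s$ to obtain
\[
\tfrac{d^n}{ds^n} L^*(f, s) = (-1)^n i^k \chi(S)\rho(S)\, \tfrac{d^n}{ds^n} L^*(f, k-s).
\]
Since $|\chi(S)| = 1$ and $\rho(S)$ is unitary, the prefactor $(-1)^n i^k \chi(S)\rho(S)$ is an invertible linear map on $\mathbb{C}^m$, so the vector on the left is nonzero if and only if the one on the right is. The reflected point $k-s = (k-\sigma) - it_0$ has real part in $\left(\frac{k-1}{2}, \frac{k}{2} - \epsilon\right)$ and imaginary part $-t_0$, hence lies in the left half-strip considered in the first case, but at height $-t_0$. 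I therefore reapply the first case with $t_0$ replaced by $-t_0$ to produce the required $f_{k,l}$.

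Combining the two, I would set $C(t_0, \epsilon, n) := \max\bigl(C(t_0, \epsilon, 1, n),\, C(-t_0, \epsilon, 1, n)\bigr)$, or take a maximum over any convenient collection of indices $i$. I do not expect any genuine obstacle: once Theorem \ref{main} is available, the corollary reduces to a one-line pigeonhole plus the reflection symmetry coming from the functional equation. The only point worth isolating is the invertibility of the matrix factor in the functional equation, and this is immediate from the unitarity of $\rho$ and the fact that $\chi$ is a unitary multiplier system.
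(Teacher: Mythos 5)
Your argument is correct and is essentially the proof the paper intends: the paper derives this corollary from Theorem \ref{main} by exactly the pigeonhole observation on the weighted sum (nonvanishing of the sum forces nonvanishing of some individual term, hence of some $\frac{d^n}{ds^n}\langle L^*(f_{k,l},s),\mbf{e}_i\rangle$ and thus of the vector $\frac{d^n}{ds^n}L^*(f_{k,l},s)$) combined with the reflection $s\mapsto k-s$ via the functional equation, whose matrix factor is invertible by unitarity. Your care with the sign $(-1)^n$ from differentiating the reflected variable and with replacing $t_0$ by $-t_0$ in the reflected strip fills in the details the paper leaves implicit.
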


\section{The case of $\Gamma_0(N)$}

 Now, we consider the case of an elliptic modular form of integral weight on the congruence subgroup $\Gamma_0(N)$. By using Theorem \ref{main}, we can extend  a result in \cite{KSW} to the case of $\Gamma_0(N)$. To illustrate,
  let $N$ be a positive integer and let  $k$ be a positive even integer. 
  Let $\Gamma = \Gamma_0(N)$ and let $S_k(\Gamma)$ be the space of cusp forms of weight $k$ on $\Gamma$. 
  Let $\{\gamma_1, \ldots, \gamma_m\}$ be the set of representatives of $\Gamma \setminus \mathrm{SL}_2(\mathbb{Z})$ with $\gamma_1 = I$.
    For $f\in S_k(\Gamma)$, we define a vector-valued function $\tilde{f}:\mathbb{H}\to \mathbb{C}^m$ by $\tilde{f} = \sum_{j=1}^m f_j\mbf{e}_j$ and
    \[
    f_j = f|_k \gamma_j\ (1\leq j\leq m),
    \]
    where $(f|_k \sm a&b\\c&d\esm)(z) := (cz+d)^{-k} f(\gamma z)$.
    Then, $\tilde{f}$ is a vector-valued modular form of weight $k$ and the trivial multiplier system  with respect to $\rho$ on $\mathrm{SL}_2(\mathbb{Z})$, where $\rho$ is a certain $m$-dimensional unitary complex representation such that $\rho(\gamma)$ is a permutation matrix for each $\gamma\in \mathrm{SL}_2(\mathbb{Z})$ and is an identity matrix if $\gamma\in\Gamma$. 
    Then, the map $f\mapsto \tilde{f}$ induces an  isomorphism between $S_k(\Gamma)$ and $S_{k,\rho}$, where $S_{k,\rho}$ denotes the space of vector-valued cusp forms of weight $k$ and trivial multiplier system with respect to $\rho$ on $\mathrm{SL}_2(\mathbb{Z})$.
    
    For $\tilde{f}, \tilde{g}\in S_{k,\rho}$, we define a Petersson inner product by
        \begin{eqnarray*}
    (\tilde{f}, \tilde{g})& :=& \int_{\mathcal{F}} <\tilde{f}, \tilde{g}> y^k \frac{dxdy}{y^2}. 
    \end{eqnarray*}
    Note that  if $f, g\in S_{k}(\Gamma)$ such that $f$ and $g$ are orthogonal, then $\tilde{f}$ and $\tilde{g}$ is also orthogonal.

    \begin{cor}
    Let $k$ be a positive even integer with $k>2$.
    Let $N$ and $n$ be  positive integers and $\Gamma = \Gamma_0(N)$.
    Let $\{f_{k,1}, \ldots, f_{k, e_k}\}$ be an orthogonal basis of $S_k(\Gamma)$.
   Let $t_0\in\mathbb{R}, \epsilon>0$. 
    Then, there exists a constant $C(t_0, \epsilon, n)>0$ such that for $k>C(t_0, \epsilon,n )$ there exists a basis element $f_{k,l}\in S_{k}(\Gamma)$ satisfying
  \[
  \frac{d^n}{ds^n} L^*(\widetilde{f_{k,l}}, s) \neq 0
  \]
 at any point $s=\sigma + it_0$ with $$\frac{k-1}{2}<\sigma < \frac k2 - \epsilon \ \ and \ \ \frac k2 + \epsilon < \sigma < \frac{k+1}2.$$

    \end{cor}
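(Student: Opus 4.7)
The plan is to transfer the statement from $S_k(\Gamma_0(N))$ to $S_{k,\rho}$ via the isomorphism $f \mapsto \tilde{f}$ recalled just before the corollary, and then invoke Theorem~\ref{main} together with the functional equation to cover both sub-strips.

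First I would observe that, since $f \mapsto \tilde{f}$ is an isomorphism preserving orthogonality (as noted in the text), the images $\{\widetilde{f_{k,1}}, \ldots, \widetilde{f_{k,e_k}}\}$ form an orthogonal basis of $S_{k,\rho}$. The representation $\rho$ is unitary since its values are permutation matrices, and the multiplier $\chi$ is trivial, so the hypotheses of Theorem~\ref{main} are in force. Because $\gamma_1 = I$, we have $(\widetilde{f_{k,l}})_1 = f_{k,l}$; hence $\kappa_1 = 0$, $n_{1,0} = 1$, the coefficient $b_{k,l,1}(1)$ is the classical first Fourier coefficient of $f_{k,l}$, and $\langle L^*(\widetilde{f_{k,l}}, s), \mbf{e}_1 \rangle = L^*(f_{k,l}, s)$.

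Next, I would apply Theorem~\ref{main} with $i = 1$ to obtain a constant $C_+(t_0, \epsilon, n) := C(t_0, \epsilon, 1, n)$ such that whenever $k > C_+$ the weighted sum
\begin{equation*}
\sum_{l=1}^{e_k} \frac{b_{k,l,1}(1)}{(\widetilde{f_{k,l}}, \widetilde{f_{k,l}})} \frac{d^n}{ds^n} \langle L^*(\widetilde{f_{k,l}}, s), \mbf{e}_1 \rangle
\end{equation*}
is nonzero at every $s = \sigma + it_0$ with $\frac{k-1}{2} < \sigma < \frac{k}{2} - \epsilon$. Non-vanishing of the sum forces at least one summand to be nonzero, and for that index $l$ the vector $\frac{d^n}{ds^n} L^*(\widetilde{f_{k,l}}, s)$ has nonzero first coordinate, hence is nonzero in $\mathbb{C}^m$. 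This handles the left sub-strip.

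For the right sub-strip $\frac{k}{2} + \epsilon < \sigma < \frac{k+1}{2}$, I would differentiate the functional equation $L^*(\widetilde{f_{k,l}}, s) = i^k \rho(S) L^*(\widetilde{f_{k,l}}, k-s)$ $n$ times in $s$, obtaining $\frac{d^n}{ds^n} L^*(\widetilde{f_{k,l}}, s) = (-1)^n i^k \rho(S)\, \frac{d^n}{ds^n} L^*(\widetilde{f_{k,l}}, k-s)$. Given $s = \sigma + it_0$ in the right strip, set $s' = k-s$, so $\mathrm{Re}(s') \in (\frac{k-1}{2}, \frac{k}{2}-\epsilon)$ and $\mathrm{Im}(s') = -t_0$. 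Applying the previous step with $-t_0$ in place of $t_0$, i.e.\ using $C_- := C(-t_0, \epsilon, 1, n)$, yields a basis element for which the derivative is nonzero at $s'$; the invertibility of $\rho(S)$ then transports the nonvanishing back to $s$. Taking $C(t_0, \epsilon, n) := \max(C_+, C_-)$ covers both strips simultaneously.

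The main obstacle is not analytic but a bookkeeping check: that the assignment $f \mapsto \tilde{f}$ truly delivers an orthogonal basis of $S_{k,\rho}$ with $\rho$ unitary, and that the identifications $(\widetilde{f})_1 = f$ and $\langle L^*(\widetilde{f},s), \mbf{e}_1\rangle = L^*(f,s)$ hold on the nose so that Theorem~\ref{main} with $i = 1$ says precisely what is needed. Once these are in hand the corollary is an immediate consequence of Theorem~\ref{main} combined with the functional equation symmetry.
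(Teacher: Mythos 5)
Your proposal is correct and follows exactly the route the paper intends: the paper states this corollary without a written proof, but the preceding setup (the isomorphism $f\mapsto\tilde f$ onto $S_{k,\rho}$ with $\rho$ a unitary permutation representation, preservation of orthogonality, and $\gamma_1=I$ so that the first component recovers $f$ and its classical $L$-function) together with Theorem~\ref{main} at $i=1$ and the functional-equation reflection for the right sub-strip is precisely your argument. Your bookkeeping checks ($\kappa_1=0$, $n_{1,0}=1$, nonvanishing of one summand forcing nonvanishing of the vector-valued derivative, and the replacement $t_0\mapsto -t_0$ when reflecting $s\mapsto k-s$) are all accurate.
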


\section{The case of Jacobi forms}

We now consider the case of Jacobi forms. 
Let $k$ be a positive even integer and $m$ be a positive integer. 
From now, we use the notation $\tau = u+iv\in\HH$ and $z = x+iy\in\CC$.
We review basic notions of Jacobi forms (for more details, see  \cite[Section 3.1]{CL} and \cite[Section 5]{EZ}).
Let $F$ be a complex-valued function on $\mathbb{H}\times \mathbb{C}$.
For $\gamma=\sm a&b\\c&d\esm \in\mathrm{SL}_2(\mathbb{Z}) , X = (\lambda, \mu)\in \mathbb{Z}^2$, we define
\[(F|_{k,m} \gamma)(\tau,z) := (c\tau+d)^{-k}e^{-2\pi im\frac{cz^2}{c\tau+d}}F(\gamma(\tau,z))\]
and
\[(F|_m X)(\tau,z) :=e^{2\pi i m (\lambda^2 \tau + 2\lambda z)}F(\tau,z+\lambda\tau+\mu),\]
where $\gamma(\tau,z) = (\frac{a\tau+b}{c\tau+d},\frac{z}{c\tau+d})$.

We give now the definition of a Jacobi form.

\begin{dfn}
A {\it{Jacobi form}} of weight $k$ and index $m$ on $\mathrm{SL}_2(\mathbb{Z})$  is a holomorphic function $F$ on $\mathbb{H}\times\mathbb{C}$ satisfying
\begin{enumerate}
\item[(1)] $F|_{k,m} \gamma =F$ for every $\gamma\in\mathrm{SL}_2(\mathbb{Z})$,
\item[(2)] $F|_m X = F$ for every $X\in \mathbb{Z}^2$,
\item[(3)] $F$ has the Fourier expansion of the form
\begin{equation} \label{Jacobifourier}
F(\tau,z) =
\sum_{\substack{l, r\in\mathbb{Z}\\ 4ml - r^2 \geq0}}a(l,r)e^{2\pi il\tau}e^{2\pi irz}.
\end{equation}
\end{enumerate}
\end{dfn}

We denote by $J_{k,m}$ the space of all Jacobi forms of weight $k$ and index $m$ on $\mathrm{SL}_2(\mathbb{Z})$. 
If a Jacobi form satisfies the  condition $a(l,r)\neq 0$ only if $4ml - r^2>0$, then it is called a Jacobi cusp form. 
We denote by $S_{k,m}$ the  space of all Jacobi cusp forms of weight $k$ and  index $m$ on $\mathrm{SL}_2(\mathbb{Z})$.

Let $F$ be a Jacobi cusp form $F\in S_{k,m}$ with its Fourier expansion (\ref{Jacobifourier}).
We define the partial $L$-functions of $F$ by
\[
L(F,j,s) := \sum_{\substack{n\in\mathbb{Z}\\  n+j^2 \equiv 0 \pmod{4m}}} \frac{a\left(\frac{n+j^2}{4m}, j\right)} {\left(\frac{n}{4m}\right)^{s}}
\]
for $1\leq j\leq 2m$.
Moreover, $F$ can  be written as
\begin{equation} \label{thetaexpansion}
F(\tau,z) = \sum_{1\leq j\leq 2m}F_j(\tau)\theta_{m,j}(\tau,z)
\end{equation}
with uniquely determined holomorphic functions $F_j:\mathbb{H}\to\mathbb{C}$ and 
functions in $\{F_j|\ 1\leq j\leq 2m\}$ have the Fourier expansions 
\[
F_j(\tau) =\sum_{\substack{n\geq0\\ n+j^2 \equiv 0 \pmod{4m}}} a\left(\frac{n+j^2}{4m}, j\right)e^{2\pi in\tau/(4m)},
\]
where the theta series $\theta_{m, j}$ is defined by
\[
\theta_{m, j}(\tau,z) := \sum_{\substack{r\in\mathbb{Z}\\ r\equiv j \pmod{2m}}} e^{2\pi ir^2\tau/(4m)} e^{2\pi irz}
\]
for $1\leq j\leq 2m$.

We write $\Mp_2(\mathbb{R})$ for the metaplectic group.
The elements of $\Mp_2(\mathbb{R})$ are pairs $(\gamma,\phi(\tau))$, 
where $\gamma = \sm a&b\\c&d\esm\in\SL_2(\mathbb{R})$, and $\phi$ denotes a holomorphic function on $\HH$ with $\phi(\tau)^2= c\tau+d$.
The map
\[\sm a&b\\c&d\esm  \mapsto \widetilde{\sm a&b\\c&d\esm} = (\sm a&b\\c&d\esm, \sqrt{c\tau+d})\]
defines a locally isomorphic embedding of $\SL_2(\mathbb{R})$ into $\Mp_2(\mathbb{R})$. Let $\Mp_2(\mathbb{Z})$ be the inverse image of $\SL_2(\mathbb{Z})$ under the covering map $\Mp_2(\mathbb{R})\to\SL_2(\mathbb{R})$. It is well-known that $\Mp_2(\mathbb{Z})$ is generated by $\widetilde{T}$ and $\widetilde{S}$. 
 We define a  $2m$-dimensional unitary complex representation $\widetilde{\rho}_m$ of $\Mp_2(\mathbb{Z})$ by 
 \[
 \widetilde{\rho}_m(\widetilde{T})  \mbf{e}_j = e^{-2\pi ij^2/(4m)}\mbf{e}_j
 \]
 and
 \[
  \widetilde{\rho}_m(\widetilde{S})  \mbf{e}_j = \frac{i^{\frac 12}}{\sqrt{2m}}\sum_{j'=1}^{2m} e^{2\pi i jj'/(2m)}\mbf{e}_{j'},
 \]
 Let $\chi$ be a multiplier system of weight $\frac12$ on $\mathrm{SL}_2(\mathbb{Z})$.
 We define a map $\rho_m : \SL_2(\mathbb{Z}) \to \GL_{2m}(\mathbb{C})$ by
\[\rho_m(\gamma) = \chi(\gamma)\widetilde{\rho}_m(\widetilde{\gamma})\]
for $\gamma\in\SL_2(\mathbb{Z})$.
The map $\rho_m$ gives a $2m$-dimensional unitary representation of $\SL_2(\mathbb{Z})$.

Let $\{\mbf{e}_1, \ldots, \mbf{e}_{2m}\}$ denote the standard basis of $\mathbb{C}^{2m}$. 
For $F\in S_{k,m}$, we define a vector-valued function $\tilde{F}:\mathbb{H}\to \mathbb{C}^{2m}$ by $\tilde{F} = \sum_{j=1}^{2m} F_j\mbf{e}_j$, where $F_j$ is defined by the theta expansion in (\ref{thetaexpansion}).
Then, the map $F\mapsto  \tilde{F}$ induces an  isomorphism between $S_{k,m}$ and  $S_{k-\frac12, \bar{\chi}, \rho_m}$.

Let $L^*(F,j,s) := \frac{\Gamma(s)}{(2\pi)^s} L(F,j,s)$.
Then, we have the following corollary.

 \begin{cor}
   Let $k$ be a positive even integer with $k>2$. 
   Let $m$ and $n$ be positive integers. 
  Let $\{F_{k,m,1}, \ldots, F_{k,m,d}\}$ be an orthogonal basis of $S_{k,m}$.
  Let $t_0\in\mathbb{R}$ and $\epsilon>0$.
  \begin{enumerate}
     
  \item
  Let $j$ be a positive integer with $1\leq j\leq 2m$.
  Then, there exists a constant $C(t_0, \epsilon,j,n)>0$ such that for any $k>C(t_0, \epsilon, j,n)$,  and any $s=\sigma + it_0$ with $$\frac{2k-3}{4}<\sigma < \frac {2k-1}{4} - \epsilon,$$ there exists a basis element $F_{k,m,l}\in S_{k,m}$ such that
  \[
   \frac{d^n}{ds^n} L^*(F_{k,m,l},j,s)\neq 0.
  \]

   \item There exists a constant $C(t_0, \epsilon,n)>0$ such that for any $k>C(t_0, \epsilon,n)$, and any $s=\sigma + it_0$ with $$\frac{2k-3}{4}<\sigma < \frac {2k-1}{4} - \epsilon \ \ and \ \ \frac {2k-1}{4} + \epsilon < \sigma < \frac{2k+1}4,$$ there exist a basis element $F_{k,m,l}\in S_{k,m}$ and $j\in\{1,\ldots, 2m\}$ such that
  \[
  \frac{d^n}{ds^n} L^*(F_{k,m,l},j,s) \neq 0.
  \]

  \end{enumerate}
  
  \end{cor}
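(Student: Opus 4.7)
The plan is to transfer the statements to the vector-valued setting through the isomorphism $F\mapsto\widetilde{F}$ between $S_{k,m}$ and $S_{k-\frac12,\bar\chi,\rho_m}$ given in the excerpt, and then invoke Theorem \ref{main} together with the functional equation of $L^*(\widetilde F,s)$. First I will set up the dictionary between the two sides. Writing the theta expansion $F=\sum_{j=1}^{2m}F_j\,\theta_{m,j}$, the component $F_j$ has the Fourier expansion
$$F_j(\tau)=\sum_{\substack{N>0\\ N\equiv -j^2\,(\mathrm{mod}\,4m)}}a\!\left(\tfrac{N+j^2}{4m},j\right)e^{2\pi iN\tau/(4m)}.$$
Setting $\kappa_j=(4m-r_j)/(4m)$ with $r_j:=j^2\bmod 4m$ (and $\kappa_j=0$ when $r_j=0$), a direct re-indexing $N=4m(n+\kappa_j)$ identifies $F_j$ with the $j$th component of $\widetilde{F}\in S_{k-\frac12,\bar\chi,\rho_m}$ in the Fourier convention used in the definition of vector-valued modular forms. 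Term-by-term this gives
$$\bigl\langle L^*(\widetilde F,s),\mbf{e}_j\bigr\rangle=L^*(F,j,s),$$
and the Fourier coefficient $b_{k,l,j}(n_{j,0})$ appearing in Theorem \ref{main} corresponds to $a_l\!\left(1+\lfloor j^2/(4m)\rfloor,j\right)$, where $a_l$ denotes the Fourier coefficients of $F_{k,m,l}$.

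Next, since $F\mapsto\widetilde F$ is an isomorphism that sends the Petersson product on $S_{k,m}$ to a positive multiple of the one on $S_{k-\frac12,\bar\chi,\rho_m}$, the orthogonal basis $\{F_{k,m,l}\}$ of $S_{k,m}$ yields an orthogonal basis $\{\widetilde{F_{k,m,l}}\}$ of $S_{k-\frac12,\bar\chi,\rho_m}$. The weight $k-\tfrac12$ lies in $\tfrac12\ZZ$, so Theorem \ref{main} applies with this weight, multiplier $\bar\chi$, representation $\rho=\rho_m$, and component index $i=j$.

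For part (1) I invoke Theorem \ref{main} at weight $k-\tfrac12$. Under the dictionary above, the non-vanishing of the weighted sum
$$\sum_{l=1}^{d}\frac{b_{k,l,j}(n_{j,0})}{(\widetilde{F_{k,m,l}},\widetilde{F_{k,m,l}})}\,\frac{d^n}{ds^n}L^*(F_{k,m,l},j,s)$$
on the strip $\tfrac{2k-3}{4}<\sigma<\tfrac{2k-1}{4}-\epsilon$ with $t=t_0$, valid for $k-\tfrac12$ beyond some $C(t_0,\epsilon,j,n)$, forces at least one summand to be non-zero, and therefore produces an index $l$ with $\frac{d^n}{ds^n}L^*(F_{k,m,l},j,s)\neq 0$. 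For part (2) I take $C(t_0,\epsilon,n):=\max_{1\le j\le 2m}C(t_0,\epsilon,j,n)$. When $s$ is in the left strip, part (1) (with any $j$, say $j=1$) already supplies an admissible pair $(l,j)$. When $s$ is in the right strip, the reflected point $s':=(k-\tfrac12)-s$ lies in the left strip, so part (1) at $s'$ with $j=1$ yields $l$ with $\frac{d^n}{ds^n}L^*(F_{k,m,l},1,s')\neq 0$; in particular the vector $\frac{d^n}{ds^n}L^*(\widetilde{F_{k,m,l}},s')$ is non-zero. Differentiating the functional equation $L^*(\widetilde F,s)=i^{k-\frac12}\bar\chi(S)\rho_m(S)L^*(\widetilde F,(k-\tfrac12)-s)$ and using the invertibility of $\rho_m(S)$ then gives $\frac{d^n}{ds^n}L^*(\widetilde{F_{k,m,l}},s)\neq 0$, so some component $j$ delivers the required pair.

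The main obstacle is the careful bookkeeping in the dictionary: matching the residues $j^2\pmod{4m}$ with the shifts $\kappa_j$ of the vector-valued components, verifying that Petersson products transfer up to a common positive scalar (which does not affect any vanishing statement), and aligning the hypothesis $k>2$ on the Jacobi side with the lower bound coming from Theorem \ref{main} at weight $k-\tfrac12$ uniformly over all $j\in\{1,\dots,2m\}$ in part (2).
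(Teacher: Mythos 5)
Your proposal matches the paper's intended argument: the paper states this corollary without further proof, relying exactly on the transfer $F\mapsto\widetilde F$ to $S_{k-\frac12,\bar\chi,\rho_m}$, the identification of $\langle L^*(\widetilde F,s),\mbf{e}_j\rangle$ with $L^*(F,j,s)$, Theorem \ref{main} for part (1), and the functional equation for the reflected strip in part (2). The only point you pass over is the one the paper itself flags in the remark following the corollary, namely that $\rho_m(-I)\mbf{e}_j=i\,\mbf{e}_{2m-j}\neq\mbf{e}_j$, so Theorem \ref{main} is not applied verbatim but via "a similar argument" for this representation; since the paper offers no more detail than that, your write-up is at the same level of rigor and otherwise correct.
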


  \begin{rmk}
Note that $\rho_m(-I)$ is not equal to the identity matrix in $\mathrm{GL}_{2m}(\mathbb{C})$. 
  Instead, we have
  \[
  \rho_m(-I) \mbf{e}_j = i \mbf{e}_{2m-j}.
  \]
  By a similar argument, we prove the same result as in Theorem \ref{main} for the representation $\rho_m$.
   \end{rmk}

   \section{The case of Kohnen plus space}
   Let $k$ be a positive even integer. 
By \cite[Theorem 5.4]{EZ}, there is an isomorphism $\phi$ between  $S_{k,1}$ and $S^+_{k-\frac12}$, where $S^+_{k-\frac12}$ denotes the space of cusp forms in the plus space of weight $k-\frac12$ on $\Gamma_0(4)$.
Moreover, this isomorphism is compatible with the Petersson scalar products.

Let $f$ be a cusp form in $S^+_{k-\frac12}$ with Fourier expansion $f(\tau) = \sum_{\substack{n>0\\ n\equiv 0,3 \pmod{4}}} c(n)e^{2\pi in\tau}$.
Then, the $L$-function of $f$ is defined by
\[
L(f,s) := \sum_{\substack{n>0\\ n\equiv 0,3 \pmod{4}}} \frac{c(n)}{n^s}.
\]
For $1\leq j\leq 2$, let $c_j$ be defined by
\[
c_j(n) :=
\begin{cases}
c(n) & \text{if $n\equiv -j^2 \pmod{4}$},\\
0 & \text{otherwise}.
\end{cases}
\]
Then, $c(n) = c_1(n) + c_2(n)$ for all $n$.
With this, we consider partial sums of $L(f,s)$ by
\[
L(f,j,s) :=  \sum_{\substack{n>0\\ n\equiv 0,3 \pmod{4}}} \frac{c_j(n)}{n^s}
\]
for $1\leq j\leq 2$.

Suppose that $F$ is a Jacobi cusp form in $S_{k,1}$.
By the theta expansion in (\ref{thetaexpansion}), we have a corresponding vector-valued modular form $(F_1(\tau), F_2(\tau))$. 
Then, the isomorphism $\phi$ from  $S_{k,1}$  to $S^+_{k-\frac12}$ is given by 
\[
\phi(F) = \sum_{j=1}^2 F_j(4\tau).
\]
 From this, we see that
\[
L(f,j,s) = \frac{1}{4^s} L(F,j,s). 
\]

We have the following corollary regarding the partial sums of $L(f,s)$ for $f\in S^+_{k-\frac12}$.

\begin{cor} \label{plus}
   Let $k$ be a positive even integer with $k>2$. 
   Let  $n$ be a positive integer. 
  Let $\{f_{k-\frac12,1}, \ldots, f_{k-\frac12,d}\}$ be an orthogonal basis of $S^+_{k-\frac12}$.
  Let $t_0\in\mathbb{R}$ and $\epsilon>0$.
  \begin{enumerate}
     
  \item
  Let $j$ be a positive integer with $1\leq j\leq 2$.
  Then, there exists a constant $C(t_0, \epsilon,j,n)>0$ such that for any $k>C(t_0, \epsilon,j,n)$,  and any $s=\sigma + it_0$ with $$\frac{2k-3}{4}<\sigma < \frac {2k-1}{4} - \epsilon,$$ there exists a basis element $f_{k-\frac12,l}\in S^+_{k-\frac12}$ such that
  \[
   \frac{d^n}{ds^n} \left[ 4^s L^*(f_{k,m,l},j,s) \right]\neq 0.
  \]

   \item There exists a constant $C(t_0, \epsilon,n)>0$ such that for any $k>C(t_0, \epsilon,n)$, and any $s=\sigma + it_0$ with $$\frac{2k-3}{4}<\sigma < \frac {2k-1}{4} - \epsilon \ \ and \ \ \frac {2k-1}{4} + \epsilon < \sigma < \frac{2k+1}4,$$ there exist a basis element $f_{k-\frac12,l}\in S^+_{k-\frac12}$ and $j\in\{1,\ldots, 2\}$ such that
  \[
  \frac{d^n}{ds^n} \left[ 4^s L^*(F_{k,m,l},j,s) \right] \neq 0.
  \]

  \end{enumerate}
  
  \end{cor}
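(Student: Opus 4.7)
My plan is to deduce this corollary directly from the Jacobi-form corollary, specialized to index $m=1$, by using the Eichler--Zagier isomorphism $\phi:S_{k,1}\to S^+_{k-\frac12}$ as a dictionary. Since $\phi$ is compatible with the Petersson scalar products (this is the content of \cite[Theorem 5.4]{EZ} as cited just above), the preimages $F_{k,1,l}:=\phi^{-1}(f_{k-\frac12,l})$ for $1\le l\le d$ constitute an orthogonal basis of $S_{k,1}$.

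The key algebraic identity, already recorded in the text just before the statement, is $L(f,j,s)=4^{-s}L(F,j,s)$ whenever $f=\phi(F)$. Multiplying through by $\Gamma(s)/(2\pi)^s$ gives $L^*(f_{k-\frac12,l},j,s)=4^{-s}L^*(F_{k,1,l},j,s)$, and hence
\[
\frac{d^n}{ds^n}\bigl[4^s L^*(f_{k-\frac12,l},j,s)\bigr] \;=\; \frac{d^n}{ds^n}L^*(F_{k,1,l},j,s).
\]
Thus non-vanishing of either side is equivalent to non-vanishing of the other, and one simply transports the conclusion of the Jacobi-form corollary across $\phi$.

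Concretely, for part (1) I take $C(t_0,\epsilon,j,n)$ to be the constant supplied by the Jacobi-form corollary with $m=1$. For any $k$ above this bound and any $s$ in the prescribed strip $\tfrac{2k-3}{4}<\sigma<\tfrac{2k-1}{4}-\epsilon$, that corollary furnishes a basis element $F_{k,1,l}$ with $\frac{d^n}{ds^n}L^*(F_{k,1,l},j,s)\neq 0$, and the image $f_{k-\frac12,l}=\phi(F_{k,1,l})$ is the required element of $S^+_{k-\frac12}$. Part (2) follows identically from the two-strip version of the same corollary, since the functional-equation argument there is unaffected by the harmless $4^s$ twist.

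There is no serious obstacle here: the proof is essentially a transport of structure through $\phi$ together with a trivial rescaling by $4^s$. The only step that deserves a moment of care is verifying that the preimage basis is genuinely orthogonal, and this is immediate from the Petersson-compatibility of $\phi$.
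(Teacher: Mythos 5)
Your proposal is correct and follows exactly the route the paper intends: transport the orthogonal basis through the Petersson-compatible isomorphism $\phi$, use the identity $L(f,j,s)=4^{-s}L(F,j,s)$ to identify $\frac{d^n}{ds^n}\bigl[4^s L^*(f,j,s)\bigr]$ with $\frac{d^n}{ds^n}L^*(F,j,s)$, and invoke the Jacobi-form corollary with $m=1$. Nothing is missing.
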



    \end{document}